\numberwithin{equation}{section}
\newtheorem{prop} {Proposition} [section]
\newtheorem{lem}[prop] {Lemma}
\newtheorem{prop-def}[prop]{Proposition-Definition}
\newtheorem*{thmA}{Theorem A} 
\newtheorem*{thmB}{Theorem B}
\newtheorem*{conj}{Conjecture}
\theoremstyle{definition}
\newtheorem{defi}[prop] {Definition}
\newtheorem*{defiint}{Definition} 
\theoremstyle{remark}
\newtheorem{rmk}[prop]{Remark}
\newtheorem*{ackn}{Acknowledgment}
\newcommand{\C}{\mathbb{C}}
\newcommand{\cH}{\mathcal{H}}
\renewcommand{\a}{\alpha}
\newcommand{\f}{\varphi}
\newcommand{\om}{\omega}
\newcommand{\p}{\psi}
\newcommand{\Om}{\Omega}
\newcommand{\ie}{{\it i.e.\ }}
\newcommand{\dbar}{\overline{\partial}}
\DeclareMathOperator{\Aut}{Aut}
\DeclareMathOperator{\reg}{reg}
\title{A note on Lang's conjecture for quotients of bounded domains}
\author{S\'ebastien Boucksom}
\address{CNRS and CMLS, \'Ecole Polytechnique, 91128 Palaiseau Cedex, France}
\email{sebastien.boucksom@polytechnique.edu}
\author{Simone Diverio}
\address{Dipartimento di Matematica \lq\lq Guido Castelnuovo\rq\rq{}, SAPIENZA Universit\`a di Roma, Piazzale Aldo Moro 5, I-00185 Roma}
\email{diverio@mat.uniroma1.it} 
\begin{document}


\removeabove{0.4cm}
\removebetween{0.4cm}
\removebelow{0.4cm}

\maketitle

\begin{prelims}

\DisplayAbstractInEnglish

\bigskip

\DisplayKeyWords

\medskip

\DisplayMSCclass

\bigskip

\languagesection{Fran\c{c}ais}

\bigskip

\DisplayTitleInFrench

\medskip

\DisplayAbstractInFrench

\end{prelims}


\newpage

\setcounter{tocdepth}{1}

\tableofcontents


%
%
\section{Introduction}
%
%

For a compact complex space $X$, Kobayashi hyperbolicity is equivalent to the fact that every holomorphic map $\C\to X$ is constant, thanks to a classical result of Brody. When $X$ is moreover projective (or, more generally, compact K\"ahler), hyperbolicity is further expected to be completely characterized by (algebraic) positivity properties of $X$ and of its subvarieties. More precisely, we have the following conjecture, due to S.~Lang.

\begin{conj}\cite[Conjecture 5.6]{Lan86}
A projective variety $X$ is hyperbolic if and only if every subvariety $($including $X$ itself\,$)$ is of general type.
\end{conj}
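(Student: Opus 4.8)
The plan is to treat the two implications separately, since they draw on different circles of ideas. For the implication that hyperbolicity forces every subvariety to be of general type, I would first use the fact that Kobayashi hyperbolicity is inherited by subvarieties: the Kobayashi pseudodistance of a subvariety $Y\subseteq X$ dominates the restriction of that of $X$, so $Y$ is again hyperbolic. This reduces the claim to the single assertion that a hyperbolic projective manifold (replacing $Y$ by a resolution of singularities, general type being a birational invariant) is of general type. Here I would argue that a hyperbolic manifold contains no rational curves---any such curve would produce a nonconstant map $\C\to\P^1\to X$, contradicting Brody's criterion---so that $X$ is not uniruled, and by the theorem of Boucksom--Demailly--P\u{a}un--Peternell the canonical class $K_X$ is then pseudoeffective. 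The delicate point is to upgrade pseudoeffectivity to \emph{bigness} of $K_X$.

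For the converse, suppose every subvariety of $X$ is of general type, and argue by induction on $\dim X$ and by contradiction. If $X$ were not hyperbolic, Brody's theorem would furnish a nonconstant holomorphic map $f\colon\C\to X$; let $Y$ be the Zariski closure of $f(\C)$. If $Y\subsetneq X$, then $Y$ is a proper subvariety, all of whose subvarieties are again subvarieties of $X$ and hence of general type, so the inductive hypothesis applies and shows $Y$ is hyperbolic---contradicting that $Y$ contains the nonconstant entire curve $f$. There remains the case $Y=X$, in which $f$ is Zariski-dense; one must then show that the general-type manifold $X$ carries no Zariski-dense entire curve, which is exactly the content of the Green--Griffiths--Lang conjecture.

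I expect the main obstacle to be precisely these two ``positivity-to-degeneracy'' steps---promoting pseudoeffectivity to bigness of $K_X$ in the first implication, and ruling out Zariski-dense entire curves in general-type varieties in the second---each of which is itself a deep open problem and constitutes the true heart of Lang's conjecture. In full generality the statement is therefore out of reach of current methods, and one is forced to impose extra structure. The strategy of the present note is to assume that the universal cover $\tX$ carries a bounded, strictly plurisubharmonic function: via $L^2$-estimates for the $\dbar$-equation such a function supplies enough negatively curved ``room'' both to force positivity of the canonical bundle on every subvariety and to directly obstruct entire curves, thereby bypassing the two obstacles above in this special but natural geometric setting, which in particular covers compact free quotients of bounded domains.
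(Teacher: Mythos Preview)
You have correctly recognized the essential point: the statement in question is Lang's \emph{conjecture}, not a theorem, and the paper offers no proof of it. Indeed, the introduction states explicitly that ``Lang's conjecture is still almost completely open in higher dimension as of this writing.'' There is therefore no ``paper's own proof'' to compare your proposal against. Your analysis of the two implications and of where the genuine difficulties lie---promoting pseudoeffectivity of $K_X$ to bigness in one direction, and the Green--Griffiths--Lang problem in the other---is an accurate diagnosis of why the conjecture remains open, and your concluding paragraph correctly summarizes the paper's actual contribution, namely Theorem~A.

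One small technical caveat in your reduction for the forward implication: a resolution $Z\to Y$ of a hyperbolic subvariety $Y$ need \emph{not} itself be hyperbolic, since the exceptional locus may well contain rational curves (e.g.\ a single blow-up of a smooth point introduces a $\P^{n-1}$). So the step ``replace $Y$ by a resolution and apply the claim to the hyperbolic manifold $Z$'' is not quite right as stated. The salvageable version is that $Z$ is at least not \emph{uniruled}: any rational curve in $Z$ must be $\mu$-exceptional because $Y$ itself contains no rational curves, so a general point of $Z$ lies on no rational curve. This still yields pseudoeffectivity of $K_Z$ via BDPP, and you are then back to the same open problem of upgrading to bigness.

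In summary: your proposal is not a proof---nor does it pretend to be---but it is a correct and well-informed explanation of why no proof currently exists, together with an accurate description of the special case the paper actually establishes.
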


Recall that a projective variety $X$ is of general type if the canonical bundle of any smooth projective birational model of $X$ is big, \textsl{i.e.}~has maximal Kodaira dimension. This is for instance the case when $X$ is smooth and \emph{canonically polarized}, \textsl{i.e.} with an ample canonical bundle $K_X$. 

Note that Lang's conjecture in fact implies that every smooth hyperbolic projective manifold $X$ is canonically polarized, as conjectured in 1970 by S.~Kobayashi. It is indeed a well-known consequence of the Minimal Model Program that any projective manifold of general type without rational curves is canonically polarized (see for instance~\cite[Theorem A]{BBP}). 

Besides the trivial case of curves and partial results for surfaces~\cite{MM83,DES79,GG80,McQ98}, Lang's conjecture is still almost completely open in higher dimension as of this writing. General projective hypersurfaces of high degree in projective space form a remarkable exception: they are known to be hyperbolic~\cite{Bro17} (see also~\cite{McQ99,DEG00,DT10,Siu04,Siu15,RY18}), and they satisfy Lang's conjecture~\cite{Cle86,Ein88,Xu94,Voi96,Pac04}. 

\medskip

It is natural to test Lang's conjecture for the following two basic classes of manifolds, known to be hyperbolic since the very beginning of the theory: 
\begin{itemize}
\item[(N)] compact K\"ahler manifolds $X$ with negative holomorphic sectional curvature; 
\item[(B)] compact, free quotients $X$ of bounded domains $\Omega\Subset\C^n$. 
\end{itemize}
In case (N), ampleness of $K_X$ was established in~\cite{WY16a,WY16b,TY17} (see also~\cite{DT16}). By curvature monotonicity, this implies that every smooth subvariety of $X$ also has ample canonical bundle. More generally, Guenancia recently showed~\cite{Gue18} that each (possibly singular) subvariety of $X$ is of general type, thereby verifying Lang's conjecture in that case. One might even more generally consider the case where $X$ carries an arbitrary Hermitian metric of negative holomorphic sectional curvature, which seems to be still open. 

\medskip

In this note, we confirm Lang's conjecture in case (B). While the case of quotients of bounded \emph{symmetric} domains has been widely studied (see, just to cite a few,~\cite{Nad89,BKT13,Bru16,Cad16,Rou16,RT18}), the general case seems to have somehow passed unnoticed. Instead of bounded domains, we consider more generally the following class of manifolds, which comprises relatively compact domains in Stein manifolds, and has the virtue of being stable under passing to an \'etale cover or a submanifold. 

\begin{defiint} We say that a complex manifold $M$ is \emph{of bounded type} if it carries a bounded, strictly plurisubharmonic\footnote{\ie each point of $M$ admits a coordinate neighbourhood on which $\f-c|z|^2$ is psh for some $c>0$.} function $\f$. 
\end{defiint}

By a well-known result of Richberg, any \emph{continuous} bounded strictly psh function on a complex manifold $M$ can be written as a decreasing limit of smooth strictly psh functions, but this fails in general for discontinuous functions~\cite[p.~66]{For}, and it is thus unclear to us whether every manifold of bounded type should carry also a \emph{smooth} bounded strictly psh function.

\begin{thmA}\label{thm:main}
Let $X$ be a compact K\"ahler manifold admitting an \'etale $($Galois$)$ cover $\tilde X\to X$ with $\tilde X$ of bounded type. Then:
\begin{itemize}
\item[(i)] $X$ is Kobayashi hyperbolic;
\item[(ii)] $X$ has large fundamental group;
\item[(iii)] $X$ is projective and canonically polarized;
\item[(iv)] every subvariety of $X$ is of general type.
\end{itemize}
\end{thmA}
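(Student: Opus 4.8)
The plan is to deduce everything from a single key fact: if $X$ admits an étale cover $\tilde X\to X$ of bounded type, then $\tilde X$ carries a bounded strictly psh function $\varphi$, and by averaging over the (finite or infinite) Galois group—or rather, by using the $L^2$-machinery directly on $\tilde X$—one can produce on $X$ itself a \emph{singular} Hermitian metric on $K_X$ with strictly positive curvature current, or equivalently show that $K_X$ is big. The mechanism is the classical one: a bounded strictly psh $\varphi$ on $\tilde X$ lets one solve the $\bar\partial$-equation with $L^2$-estimates for $(n,q)$-forms with values in $K_{\tilde X}$ twisted by the trivial bundle equipped with weight $\varphi$, and descending to $X$ (using that $\varphi$ is $\Gamma$-invariant up to bounded error, which one arranges by a partition-of-unity/averaging argument since the cover is Galois) produces many global sections of $mK_X$ for $m\gg 0$. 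This is exactly the Bergman-kernel input flagged in the keywords. Thus (iii) reduces first to: $K_X$ is big and nef, hence (after checking nefness via the hyperbolicity in (i), which forbids curves on which $K_X$ is negative, plus the Minimal Model Program statement quoted in the Introduction for manifolds of general type without rational curves) $K_X$ is ample.

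I would organize the four items as follows. First prove (i): a bounded strictly psh $\varphi$ on $\tilde X$ gives, via the Ahlfors–Schwarz lemma, a negatively curved (degenerate) metric bounding the Kobayashi–Royden infinitesimal metric from below on $\tilde X$, hence on $X$ after passing to the quotient; combined with compactness of $X$ this forces Kobayashi hyperbolicity. (Alternatively: a nonconstant entire curve $\C\to X$ lifts to $\tilde X$ since $\pi_1$ issues are irrelevant for $\C$ simply connected, and then $\varphi$ restricted to the image is a bounded subharmonic function on $\C$, hence constant, forcing the curve to lie in $\{\varphi=\max\}$, which is pluripolar—iterating/using strict plurisubharmonicity rules this out.) Next (ii): largeness of $\pi_1(X)$ follows because the bounded strictly psh function on $\tilde X$ is in particular an exhaustion-type obstruction to the existence of positive-dimensional compact analytic subsets through a general point that are contracted by $\tilde X\to X$—more precisely, any such subvariety would carry a bounded psh function attaining its max, hence be a point; this is the standard argument (going back to the Shafarevich-type considerations) that "bounded type" universal cover $\Rightarrow$ large fundamental group. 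Then (iii) as above. Finally (iv): the hypothesis "of bounded type" is, by the remark in the Introduction, stable under passing to a submanifold, so for any subvariety $Z\subseteq X$ one takes a resolution $Z'\to Z$, pulls back the cover to get an étale cover of $Z'$ whose total space embeds in (an open subset of) $\tilde X$ and therefore is again of bounded type; applying (iii) to $Z'$ gives $K_{Z'}$ big, i.e. $Z$ is of general type. One subtlety: $Z'$ need not be compact Kähler a priori—it is, being a submanifold of a resolution of a compact Kähler $X$—and one must check the étale cover of $Z'$ really has a bounded strictly psh function, which is immediate by restriction.

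The main obstacle I anticipate is item (iii), specifically making the $L^2$-descent from $\tilde X$ to $X$ rigorous when the Galois group $\Gamma$ is infinite. One cannot literally average $\varphi$ over $\Gamma$; instead the correct statement is that the $\Gamma$-invariant Bergman-type metric built from $L^2$-holomorphic $n$-forms on $\tilde X$ (weighted by $e^{-m\varphi}$, say, or unweighted but using $\varphi$ only to run Hörmander's estimate) descends, and one must verify both that this Bergman space is nonzero (positivity of the Bergman kernel, which is where boundedness and strict plurisubharmonicity of $\varphi$ enter, via a local solution of $\bar\partial$ killing a bump section) and that the resulting metric on $K_X$ has strictly positive curvature current in the sense of currents, so that Kodaira/Siu bigness applies. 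Getting from "big" to "nef", and hence to "ample", then requires knowing $X$ has no rational curves, which is exactly what (i) delivers, so the logical order must be (i)$\Rightarrow$(iii)$\Rightarrow$(iv), with (ii) slotted in independently. A secondary technical point is that the definition of bounded type only requires $\varphi$ measurable/psh, not continuous—so Richberg's smoothing may be unavailable, and the $L^2$-theory must be run with a merely bounded psh weight, which is fine for Hörmander but requires a word of care.
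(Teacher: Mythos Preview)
Your treatment of (i) and (ii) matches the paper's. For (iii) you reach ampleness via ``big $+$ no rational curves $\Rightarrow$ ample'' through MMP; this is correct but more circuitous than the paper, which observes that since $\varphi$ is strictly psh \emph{everywhere} on $\tilde X$, the $L^2$-estimates produce $1$-jets of $L^2$ holomorphic $n$-forms at every point, so the Bergman metric of $\tilde X$ is a genuine smooth K\"ahler form. Being $\Aut(\tilde X)$-invariant it descends to a smooth strictly psh metric on $K_X$, and Kodaira gives ampleness directly---no MMP, no nefness argument.

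There is, however, a genuine gap in your (iv). You write that the \'etale cover of the resolution $Z'$ ``embeds in (an open subset of) $\tilde X$ and therefore is again of bounded type'', and that the bounded strictly psh function on it ``is immediate by restriction''. This is false: the cover $\tilde Z'$ maps to $\tilde X$ via $\tilde Z'\xrightarrow{\tilde\mu}\tilde Y\hookrightarrow\tilde X$, and $\tilde\mu$ is a resolution of singularities, not an embedding. Along the exceptional locus of $\tilde\mu$ the pulled-back function $\tilde\mu^*\varphi$ is constant on positive-dimensional fibres, hence merely psh there, not strictly psh. So $\tilde Z'$ is \emph{not} of bounded type in general, and you cannot simply invoke (iii).

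The paper resolves this by proving a sharper lemma: on a complete K\"ahler manifold, a bounded psh function that is strictly psh \emph{at a single point} already forces the Bergman metric to be \emph{generically} non-degenerate (same H\"ormander argument, run only near that point). Applied to $\tilde Z'$ with $\tilde\mu^*\varphi$, which is strictly psh over $\tilde Y_{\mathrm{reg}}$, this yields a $\Gamma$-invariant psh metric on $K_{\tilde Z'}$ that is smooth and strictly psh on a nonempty Zariski open set. Descending to $K_{Z'}$, bigness then follows not from Kodaira but from Boucksom's criterion \cite{Bou02}: a psh metric with strictly positive curvature on a Zariski open set suffices for bigness. This ``generic'' version is the missing idea in your argument.
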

Note that $\tilde X$ can always be replaced with the universal cover of $X$, and hence can be assumed to be Galois. Manifolds to which the above theorem applies include compact free quotients of bounded symmetric domains, and Kodaira surfaces (see \S\ref{sect:examples} for more details).

\medskip

By~\cite[Theorem~3.2.8]{Kob98}, (i) in Theorem A holds iff $\tilde X$ is hyperbolic, which follows from the fact that manifolds of bounded type are Kobayashi hyperbolic~\cite[Theorem~3]{Sib81} (alternatively, any entire curve $f:\C\to X$ lifts to $\tilde X$, and the pull-back to $\C$ of the bounded, strictly psh function carried by $\tilde X$ has to be constant, showing that $f$ itself is constant). 

\smallskip

By definition, (ii) means that the image in $\pi_1(X)$ of the fundamental group of any subvariety $Z\subseteq X$ is infinite~\cite[\S 4.1]{Kol}, and is a direct consequence of the fact that manifolds of bounded type do not contain nontrivial compact subvarieties. 
According to the Shafarevich conjecture, $\tilde X$ should in fact be Stein; in case $\tilde X$ is a bounded domain of $\C^n$, this is indeed a classical result of Siegel~\cite{Sie50} (see also \cite[Theorem~6.2]{Kob59}). 

\smallskip

By another classical result, this time due to Kodaira~\cite{Kod}, any compact complex manifold $X$ admitting a Galois \'etale cover $\tilde X\to X$ biholomorphic to a bounded domain in $\C^n$ is projective, with $K_X$ ample. Indeed, the Bergman metric of $\tilde X$ is non-degenerate, and it descends to a positively curved metric on $K_X$. Our proof of (iii) and (iv) is a simple variant of this idea, inspired by~\cite{CZ02}. For each subvariety $Y\subseteq X$ with desingularization $Z\to Y$ and induced Galois \'etale cover $\tilde Z\to Z$, we use basic H\"ormander--Andreotti--Vesentini--Demailly $L^2$-estimates for $\dbar$ to show that the Bergman metric of $\tilde Z$ is generically non-degenerate. It then descends to a psh metric on $K_Z$, smooth and strictly psh on a nonempty Zariski open set, which is enough to conclude that $K_Z$ is big, by~\cite{Bou02}.

The above reasoning actually yields the following variant of Theorem A, which is perhaps worth mentioning. 

\begin{thmB}\label{thm:theoremb}
Let $X$ be a compact K\"ahler manifold whose universal cover carries a bounded psh function which is strictly psh at one point. Then:  
\begin{itemize}
\item[(i)] the manifold $X$ is of general type (and hence projective), 
\end{itemize}
and there exists a closed proper subset $\Sigma\subset X$ (in the Euclidean topology) such that
\begin{itemize}
\item[(ii)] any subvariety of $X$ not contained in $\Sigma$ is of general type; 
\item[(iii)] the manifold $X$ is Kobayashi hyperbolic modulo $\Sigma$; in particular, every entire curve $\mathbb C\to X$ is contained in $\Sigma$.
\end{itemize}
\end{thmB}

A result very similar to (i) of Theorem B appears in~\cite[Proposition 2.3]{Che03}, where the psh function is further assumed to be smooth. See also~\cite{Kik11,Kik13}, especially~\cite[Lemma 3.4]{Kik13}, for other related results. 

Unfortunately, at the moment we are not able to show that the proper subset $\Sigma$ in the statement above can be chosen to be Zariski closed. If it were the case, this would confirm the (strong) Green--Griffiths--Lang conjecture for this class of projective manifolds of general type.

\medskip

As a final comment, note that \emph{K\"ahler hyperbolic manifolds} in the sense of Gromov, \textsl{i.e.} compact K\"ahler manifolds $X$ carrying a K\"ahler metric $\om$ whose pull-back to the universal cover $\pi:\tilde X\to X$ satisfies $\pi^*\om=d\a$ with $\a$ bounded, also satisfy (i)--(iii) in Theorem A \cite{Gro}. It would be interesting to check Lang's conjecture for such manifolds as well.

\begin{ackn} This work was started during the first-named author's stay at SAPIENZA Universit\`a di Roma. He is very grateful to the mathematics department for its hospitality, and to INdAM for financial support. 

The second-named author warmly thanks Laura Geatti for indicating the reference \cite{Han57}.

Both authors would also like to thank Stefano Trapani for helpful discussions, in particular for pointing out the reference~\cite{For}, as well as the anonymous referee for several useful observations and remarks, in particular for suggesting to add the statement which corresponds to Theorem B.
\end{ackn}

%
%
\section{The Bergman metric and manifolds of general type}
%
%

We start by recalling that the \emph{Bergman space} of a complex manifold $M$ is the separable Hibert space $\cH=\cH(M)$ of holomorphic forms $\eta\in H^0(M,K_M)$ such that 
$$
\|\eta\|_\cH^2:=i^{n^2}\int_{\tilde X}\eta\wedge\bar\eta<\infty,
$$
with $n=\dim M$. Assuming $\cH\ne\{0\}$, we get an induced (possibly singular) psh metric $h_M$ on $K_M$, invariant under $\Aut(M)$, characterized pointwise by
$$
h/h_M=\sup_{\eta\in\cH\setminus\{0\}}\frac{|\eta|^2_h}{\|\eta\|_\cH^2}=\sum_j |\eta_j|^2_h,
$$
for any choice of smooth metric $h$ on $K_M$ and orthonormal basis $(\eta_j)$ for $\cH$ (see for instance~\cite[\S 4.10]{Kob98}). 

The curvature current of $h_M$ is classically called the \lq\lq Bergman metric\rq\rq{} of $M$; it is a \textsl{bona fide} K\"ahler form precisely on the Zariski open subset of $M$ consisting of points at which $\cH$ generates $1$-jets~\cite[Proposition 4.10.11]{Kob98}. 

\begin{defi} We shall say that a complex manifold $M$ has a \emph{non-degenerate (resp.~generically non-degenerate) Bergman metric} if its Bergman space $\cH$ generates $1$-jets at each (resp.~some) point of $M$. 
\end{defi}

We next recall the following standard consequence of $L^2$-estimates for $\dbar$. 

\begin{lem}\label{lem:1jet} Let $M$ be a complete K\"ahler manifold with a bounded psh function $\f$. If $\f$ is strictly psh on $M$ (resp.~at some point of $M$), then the Bergman metric of $M$ is non-degenerate (resp.~generically non-degenerate). 
\end{lem}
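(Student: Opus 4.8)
The plan is to use Hörmander's $L^2$-estimates for $\dbar$ in the strong form due to Andreotti–Vesentini and Demailly, with weights built from the bounded strictly psh function $\f$. Fix a point $x_0\in M$ at which $\f$ is strictly psh (this is the only point we need in the "generically non-degenerate" case; in the global case we run the argument at every $x_0$). Choose local coordinates $z=(z_1,\dots,z_n)$ centered at $x_0$ and a cutoff $\chi$ supported in a small coordinate ball, equal to $1$ near $x_0$. For a multi-index $\a$ with $|\a|\le 1$ and a nonvanishing local section $s_0$ of $K_M$ near $x_0$, consider the smooth $K_M$-valued $(0,1)$-form $g_\a=\dbar(\chi\, z^\a s_0)$, which is supported in the annulus where $\chi$ is not locally constant, hence vanishes to infinite order (in the relevant sense) at $x_0$. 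We want to solve $\dbar u_\a=g_\a$ with $u_\a$ vanishing at $x_0$ to order $\ge 2-|\a|$; then $\eta_\a:=\chi z^\a s_0-u_\a$ is a global holomorphic section of $K_M$ whose $1$-jet at $x_0$ is prescribed, and letting $\a$ range over $|\a|\le 1$ shows $\cH(M)$ generates $1$-jets at $x_0$.

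To make this work one builds a singular weight: fix a large constant $C>0$ and set $\psi=C\f+(n+1)\log|z|^2\cdot\theta$, where $\theta$ is a cutoff localizing the logarithmic singularity near $x_0$ (the $\log$ pole forces any $L^2$ solution $u_\a$ against $e^{-\psi}$ to vanish to order $\ge 2$ at $x_0$, so after multiplying back by $s_0$ the required jet-vanishing follows, the $|\a|=1$ case being automatic). The curvature condition needed for Demailly's theorem on the complete Kähler manifold $M$ is $i\Theta(K_M, h e^{-\psi})\ge \omega$ for some Kähler metric; away from the log pole this reads $C\, i\dbar\partial\f + i\Theta(h)\ge \omega$ on $\supp\chi$. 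Since $\f$ is strictly psh on the compact set $\supp\chi$ (in the generic case) or on all of $M$, and since $\f$ is bounded — so that the $e^{-C\f}$ factor only changes the $L^2$-norms of $g_\a$ and $u_\a$ by a bounded factor, keeping everything finite — this is achieved for $C$ large enough; near the pole the plurisubharmonicity of $\log|z|^2$ and a choice of auxiliary complete Kähler metric handle the estimate. The $L^2$-estimate then gives $u_\a$ with $\int_M|u_\a|^2_{h}e^{-\psi}<\infty$, in particular (the pole being integrable away from $u_\a$'s vanishing) $\eta_\a\in\cH(M)$.

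The main obstacle is bookkeeping rather than conceptual: one must (a) arrange the logarithmic weight so that finiteness of $\int |u_\a|^2 e^{-\psi}$ genuinely forces the $2-|\a|$ order vanishing at $x_0$ while not destroying global square-integrability, which is why $\theta$ localizes the pole; and (b) verify completeness of the Kähler metric used in Demailly's estimate — here we simply invoke that $M$ is assumed complete Kähler and, if necessary, add a small multiple of $i\dbar\partial|z|^2$ or use the standard trick of exhausting $M$ by the sublevel sets of a complete Kähler potential, passing to the limit at the end. Everything else (the cutoff construction, the estimate $\|g_\a\|^2_{h e^{-\psi}}<\infty$ using boundedness of $\f$, and the conclusion that $\eta_\a$ has the prescribed $1$-jet) is routine. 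Since $x_0$ was an arbitrary point of strict plurisubharmonicity, this proves generic non-degeneration, and if $\f$ is strictly psh everywhere it proves non-degeneration at every point, as claimed.
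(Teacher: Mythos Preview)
Your proposal is correct and follows essentially the same approach as the paper: build a weight $\psi = A\f + (n+1)\chi\log|z|$ combining a large multiple of the bounded psh function with a localized logarithmic pole, solve $\dbar u=\dbar\eta$ via Demailly's $L^2$-estimates on the complete K\"ahler manifold, let the pole force $u$ to vanish to order $\ge 2$ at the chosen point, and use boundedness of $\f$ (hence of $\psi$ from above) to conclude that the resulting holomorphic $n$-form lies in $\cH(M)$. The paper's write-up is marginally cleaner in that it works directly with $(n,q)$-forms, so no background metric $h$ on $K_M$ or term $i\Theta(h)$ enters the curvature bookkeeping --- only $dd^c\psi\ge\omega$ on the support of $\dbar\eta$ is needed.
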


\begin{proof} Pick a complete K\"ahler metric $\omega$ on $M$. Assume $\f$ strictly psh at $p\in M$, and fix a coordinate ball $(U,z)$ centered at $p$ with $\f$ strictly psh near $\overline U$. Pick also $\chi\in C^\infty_c(U)$ with $\chi\equiv 1$ near $p$. Since $\chi\log|z|$ is strictly psh in an open neighbourhood $V$ of $p$, smooth on $U\setminus\overline V$, and compactly supported in $U$, we can then choose $A\gg 1$ such that
$$
\psi:=(n+1)\chi\log|z|+A\f
$$
is psh on $M$, with $dd^c\p\ge\om$ on $U$. Note that $\psi$ is also bounded above on $M$, $\f$ being assumed to be bounded. 

For an appropriate choice of holomorphic function $f$ on $U$, the smooth $(n,0)$-form $\eta:=\chi f\,dz_1\wedge\dots\wedge dz_n$, which is compactly supported in $U$ and holomorphic in a neighborhood of $x$, will have any prescribed jet at $p$. The $(n,1)$-form $\bar\partial\eta$ is compactly supported in $U$, and identically zero in a neighborhood of $p$, so that $|\dbar\eta|_\om e^{-\psi}\in L^2(U)$. Since $dd^c\p\ge\om$ on $U$,~\cite[Th\'eor\`eme 5.1]{Dem82} yields an $L^2_{\mathrm{loc}}$ $(n,0)$-form $u$ on $M$ such that $\dbar u=\dbar\eta$ and
\begin{equation}\label{equ:l2}
i^{n^2} \int_M u\wedge\bar u\,e^{-2\psi}\le\int_U|\dbar\eta|^2_\om e^{-2\psi}dV_\om. 
\end{equation}
As a result, $v:=\eta-u$ is a holomorphic $n$-form on $X$. Since $u=\eta-v$ is holomorphic at $x$ and $\psi$ has an isolated singularity of type $(n+1)\log|z|$ at $x$, (\ref{equ:l2}) forces $u$ to vanish to order $2$ at $p$, so that $v$ and $\eta$ have the same $1$-jet at $p$. Finally, (\ref{equ:l2}) and the fact that $\psi$ is bounded above on $M$ shows that $u$ is $L^2$. Since $\eta$ is clearly $L^2$ as well, $v$ belongs to the Bergman space $\cH$, with given jet at $p$, and we are done. 
\end{proof}

%
%

Suppose now that $X$ is a compact complex manifold, and $\tilde X\to X$ is a Galois \'etale cover. Assume that the Bergman metric of $\tilde X$ is non-degenerate, so that the canonical metric $h_{\tilde X}$ on $K_{\tilde X}$ defined by $\cH(\tilde X)$ is smooth, strictly psh. Being invariant under automorphisms, this metric descends to a smooth, strictly psh metric on $K_X$, and the latter is thus ample by \cite{Kod}. This argument, which goes back to the same paper by Kodaira, admits the following variant. 

\begin{lem}\label{lem:big} Let $X$ be a compact K\"ahler manifold admitting a Galois \'etale cover $\tilde X\to X$ with generically non-degenerate Bergman metric. Then $X$ is projective and of general type. 
\end{lem}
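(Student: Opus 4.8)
The plan is to transfer the canonical metric $h_{\tilde X}$ on $K_{\tilde X}$ defined by the Bergman space $\cH(\tilde X)$ down to a singular psh metric on $K_X$ that is smooth and strictly psh on a nonempty Zariski open set, and then invoke Boucksom's criterion from \cite{Bou02} to conclude that $K_X$ is big. Since $\cH(\tilde X)$ is assumed to be generically non-degenerate, the metric $h_{\tilde X}$ is, by \cite[Proposition 4.10.11]{Kob98}, a genuine smooth metric with strictly positive curvature on the nonempty Zariski open subset $\tilde U\subseteq\tilde X$ where $\cH(\tilde X)$ generates $1$-jets; elsewhere it is merely a (possibly very) singular psh metric. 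Crucially, $h_{\tilde X}$ is invariant under $\Aut(\tilde X)$, hence in particular under the deck group $G$ of the Galois cover $\pi:\tilde X\to X$, so it descends to a well-defined singular psh metric $h_X$ on $K_X$. The Zariski open set $\tilde U$ is $G$-invariant (automorphisms preserve the jet-generation locus), so it descends to a nonempty Zariski open $U\subseteq X$ on which $h_X$ is smooth with strictly positive curvature.

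First I would make precise the "descent" step: a metric on $K_{\tilde X}$ invariant under the deck transformations is the pull-back of a unique metric on $K_X$, because $\pi$ is étale and $K_{\tilde X}=\pi^*K_X$ canonically and $G$-equivariantly; psh-ness, smoothness and positivity of the curvature are all local analytic properties, hence they pass up and down along the local biholomorphism $\pi$. One should check that $\cH(\tilde X)\ne\{0\}$ in the first place — but generic non-degeneration of the Bergman metric forces $\cH(\tilde X)$ to separate $1$-jets generically, so it is certainly nonzero, and the canonical metric $h_{\tilde X}$ is defined. Then I would record that the curvature current $c_1(K_X,h_X)$ is a closed positive $(1,1)$-current on the compact Kähler manifold $X$, in the class $c_1(K_X)$, that is smooth and strictly positive on the nonempty Zariski open $U$.

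At this point the conclusion follows from \cite{Bou02}: a pseudoeffective line bundle $L$ on a compact Kähler manifold carrying a singular metric whose curvature current is a Kähler current — i.e.\ dominates a Kähler form — is big, and more to the point, the criterion there shows that $L$ is big as soon as it has a singular positively-curved metric that is smooth with strictly positive curvature on some nonempty Zariski open subset (one subtracts a small multiple of a Kähler form after shrinking, using that the non-smooth locus is contained in a proper analytic subset). Applying this to $L=K_X$ with the metric $h_X$ gives that $K_X$ is big, i.e.\ $X$ is of general type. Finally $X$ is projective: a compact Kähler manifold with big canonical bundle is Moishezon, and a Moishezon Kähler manifold is projective by Moishezon's theorem.

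The main obstacle, to my mind, is not any single step but making the descent of the \emph{singular} metric rigorous enough to feed into \cite{Bou02} — in particular verifying that the jet-generation locus $\tilde U$ is Zariski open and $G$-invariant (so that $h_X$ is smooth strictly psh on a genuine nonempty Zariski open subset of $X$, not merely on some open set), and confirming that the hypotheses of the bigness criterion of \cite{Bou02} are met by a metric whose singularities away from $U$ are a priori uncontrolled. Everything else — nonvanishing of $\cH(\tilde X)$, local transfer of analytic positivity along the étale cover, and the passage Moishezon $+$ Kähler $\Rightarrow$ projective — is routine.
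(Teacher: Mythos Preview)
Your proposal is correct and follows essentially the same route as the paper: descend the $\Aut(\tilde X)$-invariant Bergman metric $h_{\tilde X}$ to a psh metric on $K_X$ that is smooth and strictly psh on a nonempty Zariski open set, invoke \cite{Bou02} to get $K_X$ big, and conclude projectivity from Moishezon $+$ K\"ahler. The paper's proof is terser but identical in substance; the points you flag as ``obstacles'' (Zariski-openness and $G$-invariance of the $1$-jet locus, descent of the singular metric) are taken for granted there, and your discussion of them is accurate.
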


\begin{proof} The assumption now means that the psh metric $h_{\tilde X}$ on $K_{\tilde X}$ is smooth and strictly psh on a non-empty Zariski open subset. It descends again to a psh metric on $K_X$, smooth and strictly psh on a non-empty Zariski open subset, and we conclude that $K_X$ is big by~\cite[\S 2.3]{Bou02} (see also~\cite[\S 1.5]{BEGZ10}). Being both Moishezon and K\"ahler, $X$ is then projective. 
\end{proof}

\begin{rmk}

Combining Lemma \ref{lem:1jet} with Lemma \ref{lem:big} one thus obtains that a compact K\"ahler manifold $X$ admitting a Galois \'etale cover $\tilde X$ which supports a bounded psh function that is strictly psh at some point is projective and of general type. 
It is perhaps worth noticing that a proof of this statement can also be obtained replacing the use of the first-named author's criterion in Lemma \ref{lem:big} by more elementary arguments using Poincar\'e series, following Gromov \cite[Corollary 3.2.B]{Gro} (see also \cite[Chapter 13]{Kol} for a very nice account of the method of Gromov). 
Indeed, the fact that the Galois cover $\tilde X$ posseses a bounded psh function that is strictly psh at some point implies both that $X$ has generically large fundamental group and that $K_{\tilde X}$ has a non-zero holomorphic $L^2$ section, the latter thanks to Lemma \ref{lem:1jet}. At this point one can use directly \cite[13.10 Corollary]{Kol}.
\end{rmk}

%
%
\section{Proof of Theorems A and B}
%
%

We finally prove the theorems stated in the introduction.

\subsection{Proof of Theorem A}

Let $X$ be a compact K\"ahler manifold with an \'etale cover $\pi:\tilde X\to X$ of bounded type, which may be assumed to be Galois after replacing $\tilde X$ by the universal cover of $X$. Since $\tilde X$ is also complete K\"ahler, its Bergman metric is non-degenerate by Lemma~\ref{lem:1jet}, and $X$ is thus projective and canonically polarized by \cite{Kod}. 

Now let $Y\subseteq X$ be an irreducible subvariety. On the one hand, pick any connected component $\tilde Y$ of the preimage $\pi^{-1}(Y)\subset\tilde X$, so that $\pi$ induces a Galois \'etale cover $\pi|_{\tilde Y}\colon\tilde Y\to Y$. On the other hand, let $\mu\colon Z\to Y$ be a projective modification with $Z$ smooth and $\mu$ isomorphic over $Y_{\reg}$, whose existence is guaranteed by Hironaka. Since $Y$ is K\"ahler and $\mu$ is projective, $Z$ is then a compact K\"ahler manifold. The fiber product $\tilde Z=Z\times_{Y }\tilde Y$ sits in the following diagram

\begin{center}
\begin{tikzcd}[row sep=small]
\tilde{Z}\ar[dr, "\tilde{\mu}"]\ar[dd,"\nu"'] & & \\
& \tilde{Y} \ar[r, hookrightarrow]\ar[dd, "\pi|_{\tilde{Y}}"] & \tilde{X} \ar[dd, "\pi"] \\
Z\ar[dr, "\mu"'] &  &  \\
 & Y \ar[r, hookrightarrow] & X
\end{tikzcd}
\end{center}
Being a base change of a Galois \'etale cover,  $\nu$ is a Galois \'etale cover, and $\tilde\mu$ is a resolution of singularities of $\tilde Y$. Since $\pi$ is \'etale, we have $\tilde Y_{\reg}=\pi^{-1}(Y_{\reg})$, and $\tilde\mu$ is an isomorphism over $\tilde Y_{\reg}$. The pull-back of $\f$ to $\tilde Z$ is thus a bounded psh function, strictly psh at any point $p\in\tilde\mu^{-1}(\tilde Y_{\reg})$. Since $Z$ is compact K\"ahler, $\tilde Z$ is complete K\"ahler. By Lemma~\ref{lem:1jet}, the Bergman metric of $Z$ is generically non-degenerate, and $Z$ is thus of general type, by Lemma~\ref{lem:big}. 

\subsection{Proof of Theorem B}

The first point of the statement follows directly from Lemma~\ref{lem:1jet} combined with Lemma \ref{lem:big}. 

Next, let $\tilde\Sigma\subset\tilde X$ be the set of points at which no bounded psh function on $\tilde X$ is strictly psh. This set is closed in the Euclidean topology and clearly invariant under holomorphic automorphisms of $\tilde X$. In particular, its image $\Sigma\subset X$ under the covering map is a proper closed subset.

For point (ii), the proof proceeds exactly as for the proof of point (iv) of Theorem A above. Indeed, we only used the following: given a subvariety $Y\subseteq X$ there exists a point on the regular locus of $Y$ whose preimage in the universal cover of $X$ contains a point of strict plurisubharmonicity of a bounded psh function on $\tilde X$. In our more general situation here, this happens precisely when $Y$ is not entirely contained in $\Sigma$.

Finally, for point (iii), by \cite[Theorem 3]{Sib81} we have that $\tilde X$ is hyperbolic at every point outside $\tilde \Sigma$. By this we mean that the infinitesimal Kobayashi metric of $\tilde X$ is locally uniformly bounded below away from zero at every such point. Therefore, the Kobayashi (pseudo)distance ---which is the integrated form of the infinitesimal Kobayashi metric--- of any two distinct points of $\tilde X$ must be positive unless they both belong to $\tilde \Sigma$, which is precisely the definition of $\tilde X$ being hyperbolic modulo $\tilde\Sigma$. Since $\tilde\Sigma$ is invariant and closed, by \cite[Theorem~3.2.32]{Kob98} we get that $X$ is hyperbolic modulo $\Sigma$. 

In particular, if $f\colon\mathbb C\to X$ is any holomorphic map, then any two distinct points in the image of $f$ have zero Kobayashi distance and must therefore sit inside $\Sigma$.

%
%
\section{Examples}\label{sect:examples}
%
%
The goal of this section is to briefly discuss examples of manifolds of bounded type with a compact free quotient. As we shall see, the list is unfortunately quite short. 

Consider first a bounded domain $\Om\Subset\C^n$ that admits a compact free quotient. 

\begin{itemize}
\item By~\cite{Sie50}, $\Om$ is automatically pseudoconvex (see also ~\cite[Theorem 6.2]{Kob59}).  

\item If $\partial\Om$ is $C^2$, then $\Om$ is biholomorphic to a ball~\cite{Won77,Ros79} (this already holds when $\partial\Om$ is $C^{1,1}$, by a recent result of Zimmer~\cite{Zim19}). 

\item If $\Om$ is homogeneous, then the Lie group $\Aut(\Om)$ admits a (uniform) lattice, hence is unimodular, and $\Om$ is thus necessarily symmetric~\cite[Theorem IV]{Han57}. 

\item If $\Om$ is convex hyperbolic, or if $\Om$ is irreducible and $\Aut(\Om)$ is positive dimensional, then $\Om$ is symmetric~\cite{Fra89,Fra95}. 

\end{itemize}

\subsubsection*{Bounded symmetric domains}

As mentioned in the introduction, a classical result of A.~Borel conversely implies that any bounded symmetric domain admits a compact free quotient~\cite{Bor63}. 

But even in this extensively studied framework, as far as we know, the first proof of the fact that compact free quotients of bounded symmetric domains satisfy Lang's conjecture (which corresponds to part (iv) of our Theorem A) is quite recent \cite{BKT13}. The proof by Brunebarbe--Klingler--Totaro is anyway of a different nature and appears to be perhaps less direct than ours, concerning mostly the negativity properties of the (holomorphic sectional and bisectional) curvature of the Bergman metric, which are under control in the symmetric case only. 

In this spirit, the same result also follows from Guenancia's work~\cite{Gue18}, since bounded symmetric domains enter the (\textsl{a priori} much larger) class (N) mentioned in the introduction (observe that while in \cite{BKT13} both the holomorphic bisectional and sectional curvatures have to be under control, \cite{Gue18} only requires a negative bound from above for the holomorphic sectional curvature).

\subsubsection*{Universal covers of Kodaira fibrations}

Examples of non-symmetric (and hence non-homogeneous) bounded domains with a compact free quotient arise from universal covers of Kodaira fibrations.

Recall that the latter are non-isotrivial holomorphic submersions with connected fibers $f\colon S\to C$ from a smooth projective surface to a smooth curve. Both the fiber and the base are then of genus at least $2$, examples arising for instance from the (projective) Satake compactification of the moduli space of curves of genus $g\ge 3$ (see \cite[\S V.14]{BHPV04}). 

By~\cite{Gri71}, the universal cover $\tilde S$ is biholomorphic to a bounded domain $\Om\Subset\C^2$, which is not symmetric. 

To see this, it is enough to check that $c_1^2(S)\ne 3 c_2(S)$ and $c_1(S)^2\ne 2c_2(S)$, by the Hirzebruch proportionality principle (since the only bounded symmetric domains in $\mathbb C^2$ are the ball and the bi-disc). While it is straightforward to see that the topological index $\tau(S)=\dfrac{1}{3}\bigl(c_1^2(S)-2c_2(S)\bigr)$ of a Kodaira fibration is always strictly positive, $c_1(S)^2\ne 3 c_2(S)$ is more involved~\cite{Liu96}.

Observe that for this family of examples, verifying Lang's conjecture is however straightforward since Kodaira fibrations are classically known to be of general type and, being Kobayashi hyperbolic surfaces, they cannot contain any (possibly singular) rational or elliptic curve.






\begin{thebibliography}{BHPV04++}

\bibitem[BHPV04]{BHPV04}
W.~P.~Barth, K.~Hulek, C.~A.~M.~Peters and A.~Van~de Ven,
  \emph{Compact complex surfaces}, second ed., Ergebnisse der Mathematik und
  ihrer Grenzgebiete. 3. Folge. A Series of Modern Surveys in Mathematics
  [Results in Mathematics and Related Areas. 3rd Series. A Series of Modern
  Surveys in Mathematics], vol.~4, Springer-Verlag, Berlin, 2004.

\bibitem[Bor63]{Bor63}
A.~Borel, \emph{Compact {C}lifford-{K}lein forms of symmetric spaces},
  Topology \textbf{2} (1963), 111--122.
  
\bibitem[Bou02]{Bou02}
S.~Boucksom, \emph{On the volume of a line bundle}, Internat. J. Math.
  \textbf{13} (2002), no.~10, 1043--1063.  

\bibitem[BBP13]{BBP}
S.~Boucksom, A.~Broustet and G.~Pacienza,
  \emph{Uniruledness of stable base loci of adjoint linear systems via {M}ori
  theory}, Math. Z. \textbf{275} (2013), no.~1-2, 499--507.

\bibitem[BEGZ10]{BEGZ10}
S.~Boucksom, P.~Eyssidieux, V.~Guedj and A.~Zeriahi,
  \emph{Monge-{A}mp\`ere equations in big cohomology classes}, Acta Math.
  \textbf{205} (2010), no.~2, 199--262.

\bibitem[Bro17]{Bro17}
D.~Brotbek, \emph{On the hyperbolicity of general hypersurfaces}, Publ.
  Math. Inst. Hautes \'{E}tudes Sci. \textbf{126} (2017), 1--34.

\bibitem[{Bru}16]{Bru16}
Y.~{Brunebarbe}, \emph{{A strong hyperbolicity property of locally symmetric
  varieties}}, preprint \arXiv{1606.03972} (2016), to appear in Ann. Sci. \'Ec. Norm. Sup\'er.

\bibitem[BKT13]{BKT13}
Y.~Brunebarbe, B.~Klingler and B.~Totaro, \emph{Symmetric
  differentials and the fundamental group}, Duke Math. J. \textbf{162} (2013),
  no.~14, 2797--2813.

\bibitem[{Cad}16]{Cad16}
B.~{Cadorel}, \emph{{Symmetric differentials on complex hyperbolic manifolds
  with cusps}}, preprint \arXiv{1606.05470} (2016), to appear in J. Differential Geom.

\bibitem[Che03]{Che03}
B.-Y.~Chen, \emph{The {B}ergman metric on complete {K}\"{a}hler manifolds},
  Math. Ann. \textbf{327} (2003), no.~2, 339--349.
  
  \bibitem[CZ02]{CZ02}
B.-Y.~Chen and J.-H.~Zhang, \emph{The {B}ergman metric on a {S}tein
  manifold with a bounded plurisubharmonic function}, Trans. Amer. Math. Soc.
  \textbf{354} (2002), no.~8, 2997--3009.

\bibitem[Cle86]{Cle86}
H.~Clemens, \emph{Curves on generic hypersurfaces}, Ann. Sci. \'{E}cole
  Norm. Sup. (4) \textbf{19} (1986), no.~4, 629--636.

\bibitem[Dem82]{Dem82}
J.-P.~Demailly, \emph{Estimations {$L^{2}$}\ pour l'op\'erateur {$\bar
  \partial $}\ d'un fibr\'e vectoriel holomorphe semi-positif au-dessus d'une
  vari\'et\'e k\"ahl\'erienne compl\`ete}, Ann. Sci. \'Ecole Norm. Sup. (4)
  \textbf{15} (1982), no.~3, 457--511.

\bibitem[DEG00]{DEG00}
J.-P.~Demailly and J.~El~Goul, \emph{Hyperbolicity of generic
  surfaces of high degree in projective 3-space}, Amer. J. Math. \textbf{122}
  (2000), no.~3, 515--546.

\bibitem[Des79]{DES79}
M.~Deschamps, \emph{Courbes de genre g\'eom\'etrique born\'e sur une
  surface de type g\'en\'eral $[$d'apr\`es {F}. {A}. {B}ogomolov$]$}, S\'eminaire
  {B}ourbaki, 30e ann\'ee (1977/78), Lecture Notes in Math., vol. 710,
  Springer, Berlin, 1979, Exp. No. 519, pp. 233--247.

\bibitem[DT10]{DT10}
S.~Diverio and S. Trapani, \emph{A remark on the codimension of the
  {G}reen-{G}riffiths locus of generic projective hypersurfaces of high
  degree}, J. Reine Angew. Math. \textbf{649} (2010), 55--61.

\bibitem[DT19]{DT16}
\bysame, \emph{Quasi-negative holomorphic sectional curvature and positivity of
  the canonical bundle}, J. Differential Geom. \textbf{111} (2019), no.~2,
  303--314.

\bibitem[Ein88]{Ein88}
L.~Ein, \emph{Subvarieties of generic complete intersections}, Invent.
  Math. \textbf{94} (1988), no.~1, 163--169.

\bibitem[FS87]{For}
J.~E. Forn\ae{}ss and B.~Stens\o{}nes, \emph{Lectures on counterexamples
  in several complex variables}, Mathematical Notes, vol.~33, Princeton
  University Press, Princeton, NJ; University of Tokyo Press, Tokyo, 1987.

\bibitem[Fra89]{Fra89}
S.~Frankel, \emph{Complex geometry of convex domains that cover varieties},
  Acta Math. \textbf{163} (1989), no.~1-2, 109--149.

\bibitem[Fra95]{Fra95}
\bysame, \emph{Locally symmetric and rigid factors for complex manifolds via
  harmonic maps}, Ann. of Math. (2) \textbf{141} (1995), no.~2, 285--300.

\bibitem[GG80]{GG80}
M.~Green and P.~A.~Griffiths, \emph{Two applications of algebraic geometry
  to entire holomorphic mappings}, The {C}hern {S}ymposium 1979 ({P}roc.
  {I}nternat. {S}ympos., {B}erkeley, {C}alif., 1979), Springer, New
  York-Berlin, 1980, pp.~41--74.

\bibitem[Gri71]{Gri71}
P.~A. Griffiths, \emph{Complex-analytic properties of certain {Z}ariski
  open sets on algebraic varieties}, Ann. of Math. (2) \textbf{94} (1971),
  21--51.

\bibitem[Gro91]{Gro}
M.~Gromov, \emph{K\"{a}hler hyperbolicity and {$L_2$}-{H}odge theory}, J.
  Differential Geom. \textbf{33} (1991), no.~1, 263--292. 

\bibitem[{Gue}18]{Gue18}
H.~{Guenancia}, \emph{{Quasi-projective manifolds with negative holomorphic
  sectional curvature}}, preprint \arXiv{1808.01854} (2018), to appear in Duke Math. J.

\bibitem[Han57]{Han57}
J.-I.~Hano, \emph{On {K}aehlerian homogeneous spaces of unimodular {L}ie
  groups}, Amer. J. Math. \textbf{79} (1957), 885--900.

\bibitem[Kik11]{Kik11}
S.~Kikuta, \emph{Carath\'{e}odory measure hyperbolicity and positivity of
  canonical bundles}, Proc. Amer. Math. Soc. \textbf{139} (2011), no.~4,
  1411--1420.

\bibitem[Kik13]{Kik13}
\bysame, \emph{Restricted {C}arath\'{e}odory measure and restricted volume of
  the canonical bundle}, Michigan Math. J. \textbf{62} (2013), no.~2, 259--292.

\bibitem[Kob59]{Kob59}
S.~Kobayashi, \emph{Geometry of bounded domains}, Trans. Amer. Math.
  Soc. \textbf{92} (1959), 267--290.

\bibitem[Kob98]{Kob98}
\bysame, \emph{Hyperbolic complex spaces}, Grundlehren der Mathematischen
  Wissenschaften [Fundamental Principles of Mathematical Sciences], vol. 318,
  Springer-Verlag, Berlin, 1998.

\bibitem[Kod54]{Kod}
K.~Kodaira, \emph{On {K}\"{a}hler varieties of restricted type (an intrinsic
  characterization of algebraic varieties)}, Ann. of Math. (2) \textbf{60}
  (1954), 28--48.

\bibitem[Kol95]{Kol}
J.~Koll\'{a}r, \emph{Shafarevich maps and automorphic forms}, M. B.
  Porter Lectures, Princeton University Press, Princeton, NJ, 1995.

\bibitem[Lan86]{Lan86}
S.~Lang, \emph{Hyperbolic and {D}iophantine analysis}, Bull. Amer. Math.
  Soc. (N.S.) \textbf{14} (1986), no.~2, 159--205.

\bibitem[Liu96]{Liu96}
K.~Liu, \emph{Geometric height inequalities}, Math. Res. Lett. \textbf{3}
  (1996), no.~5, 693--702.

\bibitem[McQ98]{McQ98}
M.~McQuillan, \emph{Diophantine approximations and foliations}, Inst.
  Hautes \'Etudes Sci. Publ. Math. (1998), no.~87, 121--174.

\bibitem[McQ99]{McQ99}
\bysame, \emph{Holomorphic curves on hyperplane sections of {$3$}-folds},
  Geom. Funct. Anal. \textbf{9} (1999), no.~2, 370--392.

\bibitem[MM83]{MM83}
S.~Mori and S.~Mukai, \emph{The uniruledness of the moduli space of
  curves of genus {$11$}}, Algebraic geometry ({T}okyo/{K}yoto, 1982), Lecture
  Notes in Math., vol. 1016, Springer, Berlin, 1983, pp.~334--353.

\bibitem[Nad89]{Nad89}
A.~M.~Nadel, \emph{The nonexistence of certain level structures on
  abelian varieties over complex function fields}, Ann. of Math. (2)
  \textbf{129} (1989), no.~1, 161--178.

\bibitem[Pac04]{Pac04}
G.~Pacienza, \emph{Subvarieties of general type on a general projective
  hypersurface}, Trans. Amer. Math. Soc. \textbf{356} (2004), no.~7,
  2649--2661.

\bibitem[RY18]{RY18}
E.~{Riedl} and D.~{Yang}, \emph{{Applications of a grassmannian technique in
  hypersurfaces}}, preprint \arXiv{1806.02364} (2018).

\bibitem[Ros79]{Ros79}
J.-P. Rosay, \emph{Sur une caract\'{e}risation de la boule parmi les
  domaines de {${\bf C}^{n}$} par son groupe d'automorphismes}, Ann. Inst.
  Fourier (Grenoble) \textbf{29} (1979), no.~4, ix, 91--97.

\bibitem[Rou16]{Rou16}
E.~Rousseau, \emph{Hyperbolicity, automorphic forms and {S}iegel modular
  varieties}, Ann. Sci. \'Ec. Norm. Sup\'er. (4) \textbf{49} (2016), no.~1,
  249--255.

\bibitem[RT18]{RT18}
E.~Rousseau and F.~Touzet, \emph{Curves in {H}ilbert modular
  varieties}, Asian J. Math. \textbf{22} (2018), no.~4, 673--690.

\bibitem[Sib81]{Sib81}
N.~Sibony, \emph{A class of hyperbolic manifolds}, Recent developments in
  several complex variables ({P}roc. {C}onf., {P}rinceton {U}niv., {P}rinceton,
  {N}. {J}., 1979), Ann. of Math. Stud., vol. 100, Princeton Univ. Press,
  Princeton, N.J., 1981, pp.~357--372.

\bibitem[Sie50]{Sie50}
C.~L.~Siegel, \emph{Analytic {F}unctions of {S}everal {C}omplex {V}ariables},
  Institute for Advanced Study, Princeton, N.J., 1950, Notes by P. T. Bateman.

\bibitem[Siu04]{Siu04}
Y.-T.~Siu, \emph{Hyperbolicity in complex geometry}, The legacy of {N}iels
  {H}enrik {A}bel, Springer, Berlin, 2004, pp.~543--566.

\bibitem[Siu15]{Siu15}
\bysame, \emph{Hyperbolicity of generic high-degree hypersurfaces in complex
  projective space}, Invent. Math. \textbf{202} (2015), no.~3, 1069--1166.

\bibitem[TY17]{TY17}
V.~Tosatti and X.~Yang, \emph{An extension of a theorem of
  {W}u-{Y}au}, J. Differential Geom. \textbf{107} (2017), no.~3, 573--579.

\bibitem[Voi96]{Voi96}
C.~Voisin, \emph{On a conjecture of {C}lemens on rational curves on
  hypersurfaces}, J. Differential Geom. \textbf{44} (1996), no.~1, 200--213.

\bibitem[Won77]{Won77}
B.~Wong, \emph{Characterization of the unit ball in {${\bf C}^{n}$} by its
  automorphism group}, Invent. Math. \textbf{41} (1977), no.~3, 253--257.

\bibitem[WY16a]{WY16a}
D.~Wu and S.-T.~Yau, \emph{Negative holomorphic curvature and positive
  canonical bundle}, Invent. Math. \textbf{204} (2016), no.~2, 595--604.

\bibitem[WY16b]{WY16b}
\bysame, \emph{A remark on our paper ``{N}egative holomorphic curvature and
  positive canonical bundle''}, Comm. Anal. Geom. \textbf{24} (2016), no.~4, 901--912.

\bibitem[Xu94]{Xu94}
G.~Xu, \emph{Subvarieties of general hypersurfaces in projective space}, J.
  Differential Geom. \textbf{39} (1994), no.~1, 139--172.

\bibitem[Zim19]{Zim19}
A.~Zimmer, \emph{Smoothly bounded domains covering compact manifolds},
  preprint \arXiv{1910.05288} (2019).

\end{thebibliography}
\end{document}